\DeclareMathAlphabet{\mathpzc}{OT1}{pzc}{m}{it}
\theoremstyle{plain} 
\newtheorem{theorem}{Theorem}[section] 
\newtheorem{lemma}[theorem]{Lemma}
\newtheorem{question}[theorem]{Question} 
\newtheorem{proposition}[theorem]{Proposition}
\theoremstyle{definition}
\theoremstyle{remark}
\def\timenow{\@hour=\time \divide\@hour by 60
\number\@hour:
  \multiply\@hour by 60 \@minute=\time
  \global\advance\@minute by -\@hour
  \ifnum\@minute<10 0\number\@minute\else
  \number\@minute\fi}
\def\ctimenow{\hfil{\tt \jobname.tex, \today~Time: \timenow }\hfil}
      \let\@oddfoot\ctimenow\let\@evenfoot\ctimenow
\begin{document}

\begin{center}{\bf \Large
Porosities of the sets of attractors
}
\end{center}
\smallskip
\begin{center}
By
\end{center}
\smallskip
\begin{center} Pawe\l{} Klinga and Adam Kwela
\end{center}

\begin{abstract}
This  paper  is  another  attempt  to  measure  the  difference between  the  family $A[0,1]$  of  attractors for iterated function systems acting on $[0,1]$ and  a  broader  family,  the  set $A_w[0,1]$ of attractors for weak iterated function systems acting on $[0,1]$. 

It is known that both $A[0,1]$ and $A_w[0,1]$ are meager subsets of the hyperspace $K([0,1])$ (of all compact subsets of $[0,1]$ equipped in the Hausdorff metric). Actually, $A[0,1]$ is even $\sigma$-lower porous while the question about $\sigma$-lower porosity of $A_w[0,1]$ is still open. 

We prove that $A[0,1]$ is not $\sigma$-strongly porous in $K([0,1])$. Moreover, we show that $A_w[0,1]\setminus A[0,1]$ is dense in $K([0,1])$. 
\let\thefootnote\relax\footnote{2010 Mathematics Subject Classification: Primary: 28A80. Secondary: 26A18.
}
\let\thefootnote\relax\footnote{Key words and phrases: attractors, iterated function systems, weak iterated function systems, Banach fractals, upper porosity, $\sigma$-upper porosity, lower porosity, $\sigma$-lower porosity, strong porosity, $\sigma$-strong porosity}
\end{abstract}

\section{Introduction}

We are mainly interested in two families of compact subsets of the space $[0,1]^d$, where $d\in\mathbb{N}$:
\begin{itemize}
\item the set $A[0,1]^d$ of attractors for iterated function systems acting on $[0,1]^d$,
\item the set $A_w[0,1]^d$ of attractors for weak iterated function systems acting on $[0,1]^d$.
\end{itemize}
We will consider both $A[0,1]^d$ and $A_w[0,1]^d$ as subsets of the hyperspace $K([0,1]^d)$ of all compact subsets of $[0,1]^d$ equipped in the Hausdorff metric. 

This approach is present in literature. In \cite[Theorem 3.9]{DS1} (see also \cite{DS2}) E. D'Aniello and T.H. Steele proved that the family $A[0,1]^d$ is a meager subset of $K([0,1]^d)$, for every $d\in\mathbb{N}$ (see also \cite[Theorem 5]{SS} where this fact is proved, however the statement of \cite[Theorem 5]{SS} is false). In \cite[Theorem 4.1]{DS3} and \cite[Theorem 4.1]{my}, independently, it is shown that the same holds for the family $A_w[0,1]^d$. Moreover, the result about attractors for classical iterated function systems has been strengthened in \cite[Theorem 3.1]{my} by showing that $A[0,1]^d$ is even smaller -- it is a $\sigma$-lower porous subset of $K([0,1]^d)$. The question about $\sigma$-lower porosity of $A_w[0,1]^d$ is still open (see \cite[Question 4.4]{my}).

There are still many other open problems concerning comparison of families $A[0,1]^d$ and $A_w[0,1]^d$. For instance, $A[0,1]^d$ is $F_\sigma$, while the exact descriptive complexity of $A_w[0,1]^d$ is unknown (recently in \cite{B} it only has been shown that it is not $F_{\sigma\delta}$). This  paper  is  another  attempt  to  measure  the  difference between  the  families $A[0,1]^d$ and $A_w[0,1]^d$.

In Section 2 we provide basic definitions needed in our considerations. Section 3 includes some basic preliminary results. For instance, we show that the set $A_w[0,1]^d\setminus A[0,1]^d$ cannot be nowhere dense itself as it is dense in $K([0,1]^d)$, and argue that $\sigma$-strong porosity is an appropriate notion for our purposes (as the collection of all finite subsets of $[0,1]^d$ is $\sigma$-strongly porous). In Section 4 we prove our main result, that $A[0,1]$ is not $\sigma$-strongly porous.

\section{Basic notions and definitions}

\subsection{Hyperspace of compact subsets}

Let $(X,d)$ be a compact metric space. By $K(X)$ we denote the family of all compact subsets of $X$. We will consider $K(X)$ as a topological space equipped with the Hausdorff metric $d_H$, which is given by 
$$d_H(A,B)=\min\left\{r\geq 0: A\subseteq\widetilde{B}_r\text{ and }B\subseteq\widetilde{A}_r\right\},$$
where for $C\in K(X)$ and $r\geq 0$ we write 
$$\widetilde{C}_r=\bigcup_{c\in C}\left\{x\in X: d(x,c)\leq r\right\}.$$
For $C\in K(X)$ and $r>0$ by $B_H(C,r)$ we will denote the ball in $K(X)$ of radius $r$ centered at $C$. Throughout the paper we will assume that $X=[0,1]^d$. For convenience in this case we will write $K[0,1]^d$ instead of $K([0,1]^d)$. Moreover, if $d=1$ then we will write $K[0,1]$ instead of $K[0,1]^1$.

For $r\in\mathbb{R}$, $x=(x_1,\ldots,x_d)\in[0,1]^d$ and $A\in K[0,1]^d$ we will write 
$$r\cdot A=\{(ry_1,\ldots,ry_d):(y_1,\ldots,y_d)\in A\},$$
$$x+A=\{(x_1+y_1,\ldots,x_d+y_d):(y_1,\ldots,y_d)\in A\}.$$ 
Note that both $rA$ and $x+A$ are compact subsets of $\mathbb{R}^d$. Thus, $rA\in K[0,1]^d$ and $x+A\in K[0,1]^d$ provided that $rA\subseteq[0,1]^d$ and $x+A\subseteq[0,1]^d$.

\subsection{Iterated function systems}

By contraction on $X$ we understand a function $f\colon X \to X$ such that:
$$\exists_{L\in [0,1)}\ \forall_{x,y\in X}\ d(f(x),f(y))\leqslant L\cdot d(x,y).$$
The smallest $L$ for which this inequality holds is called a Lipschitz constant of $f$ and denoted $Lip(f)$.

A function $f\colon X \to X$ is a weak contraction if for every pair of distinct points $x,y\in X$ it is true that $d(f(x),f(y))<d(x,y)$.

Every finite collection of contractions on $X$ will be called an iterated function system (IFS, in short) acting on $X$. Similarly, a finite collection of weak contractions on $X$ will be called a weak iterated function system (wIFS, in short) acting on $X$.

\subsection{Attractors}

Let $\{ s_1, \dots, s_k \}$ be an IFS acting on $X$. By $\mathcal{S}\colon K(X) \to K(X)$ we denote the Hutchinson operator for $\{ s_1, \dots, s_k \}$, i.e.
$$\mathcal{S}(A) = \bigcup_{i=1}^k s_i(A).$$
If $(X,d)$ is complete, then so is $(K(X),d_H)$. Therefore, by applying the Banach fixed-point theorem, the equation $\mathcal{S}(A) = A$ has a unique compact solution. Such set is called the attractor for the iterated function system $\{ s_1, \dots, s_k \}$. 

Analogously, for a wIFS $\{ s_1, \dots, s_k \}$ acting on $X$, a compact set $A\in K(X)$ satisfying $\bigcup_{i=1}^k s_i(A)=A$ is called a weak IFS attractor for the system $\{s_1,\dots,s_k\}$. It is shown in \cite{E} that for every weak iterated function system there exists a weak IFS attractor provided that $X$ is a compact space.

The form of the Hutchinson operator imposes that attractors are self-similar (at least in some sense), so they are often used to describe fractals. Clearly, an IFS attractor is a weak IFS attractor. However, the reverse inclusion is not true by \cite{NFM}.

By $A[0,1]^d$ and $A_w[0,1]^d$ we will denote the sets of all attractors for IFS and wIFS, respectively, acting on $[0,1]^d$. If $d=1$ then we will write $A[0,1]$ and $A_w[0,1]$ instead of $A[0,1]^1$ and $A_w[0,1]^1$, respectively.

\subsection{Porosities}

Fix a metric space $(X,d)$ and denote by $B(z,\delta)$ a ball in $(X,d)$ of radius $\delta>0$ centered at $z\in X$. For any subset $A$ of $X$ and any point $x\in X$ we define
$$\overline{p}(A,x)=\limsup_{R\to 0^+}\frac{\sup\{r\geq 0:B(y,r)\subseteq B(x,R)\setminus A\text{ for some }y\in X\}}{R},$$
$$\underline{p}(A,x)=\liminf_{R\to 0^+}\frac{\sup\{r\geq 0:B(y,r)\subseteq B(x,R)\setminus A\text{ for some }y\in X\}}{R}.$$
Observe that $\overline{p}(A,x)=1$ whenever $x\notin\overline{A}$ and $0\leq\overline{p}(A,x)\leq \frac{1}{2}$ in the opposite case (see \cite[Remark 1.2.(i)]{Preiss}).

We say that $A$ is:
\begin{itemize}
\item lower porous if $\underline{p}(A,x)>0$ for every $x\in X$,
\item upper porous if $\overline{p}(A,x)>0$ for every $x\in X$,
\item strongly porous if $\overline{p}(A,x)\geq \frac{1}{2}$ for every $x\in X$ (equivalently, $\overline{p}(A,x)=\frac{1}{2}$ for every $x\in \overline{X}$),
\item $\sigma$-lower porous ($\sigma$-upper porous, $\sigma$-strongly porous) if it is a countable union of lower porous (upper porous, strongly porous) sets.
\end{itemize}
Clearly, each lower porous set as well as each strongly porous set is upper porous. What is more, since the condition defining upper porosity is a strengthening of nowhere density, each $\sigma$-upper porous set is meager.

In this known that on the real line there are upper porous sets that are not $\sigma$-lower porous. In fact, there is even a closed strongly porous set which is not $\sigma$-lower porous (\cite[Corollary 4.1.(ii)]{shell}). On the other hand, there exist countable upper porous sets which are not strongly porous (see \cite[Remark 1.2(iv) and Example 4.6]{Preiss}). 

We will use a different, but equivalent (thanks to \cite[Remark 1.2.(ii)]{Preiss}), definition of strong porosity: $A$ is strongly porous, if for each $x\in X$ there are two sequences $(x_n)\subseteq X$ and $(r_n)\subseteq(0,+\infty)$ such that:
\begin{itemize}
\item $\lim_n x_n=x$,
\item $\lim_n\frac{r_n}{d(x,x_n)}=1$,
\item $B(x_n,r_n)\cap A=\emptyset$ for all $n$.
\end{itemize}

In \cite[Theorem 3.1]{my} it is shown that the set $A[0,1]^d$ is $\sigma$-lower porous. The question whether $A_w[0,1]^d$ is $\sigma$-lower porous is still open (see \cite[Question 4.4]{my}).

For more on porosities see for instance \cite{shell}, \cite{Preiss} or \cite{Z}.

\section{Preliminary results}

We start this section with a result enabling to transfer some results about attractors in $K[0,1]$ to arbitrary dimension $d\in\mathbb{N}$. However the proof can be found in \cite{D}, we repeat it for the sake of completeness.

\begin{lemma}[{\cite[Lemma 2.2]{D}}]
\label{l}
For each $d\in\mathbb{N}$ the following hold:
\begin{itemize}
\item $A\in A[0,1]$ if and only if $A\times\{0\}^{d-1}\in A[0,1]^d$,
\item $A\in A_w[0,1]$ if and only if $A\times\{0\}^{d-1}\in A_w[0,1]^d$.
\end{itemize}
\end{lemma}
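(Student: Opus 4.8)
The statement is biconditional in each bullet, and the two bullets have parallel proofs (one for IFS, one for wIFS), so I would organize the argument around two directions. The forward direction is the easier one: given an IFS $\{s_1,\dots,s_k\}$ on $[0,1]$ with attractor $A$, I would lift each $s_i$ to a map $\hat s_i$ on $[0,1]^d$ by declaring $\hat s_i(x_1,\dots,x_d) = (s_i(x_1), 0, \dots, 0)$, i.e. apply $s_i$ in the first coordinate and collapse all remaining coordinates to $0$. One checks that $\hat s_i$ is a contraction on $[0,1]^d$ (with $Lip(\hat s_i) \le \max\{Lip(s_i), 1\}$, but in fact the collapsing of the other coordinates to a point means the relevant Lipschitz estimate still goes through with constant $Lip(s_i)<1$, since $d_\infty(\hat s_i(x),\hat s_i(y)) = |s_i(x_1)-s_i(y_1)| \le Lip(s_i)|x_1-y_1| \le Lip(s_i) d_\infty(x,y)$). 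Then the Hutchinson operator of $\{\hat s_1,\dots,\hat s_k\}$ maps $A\times\{0\}^{d-1}$ to $\bigcup_i s_i(A)\times\{0\}^{d-1} = A\times\{0\}^{d-1}$, and by uniqueness of the attractor this is \emph{the} attractor. The same lift works verbatim for weak contractions, since if $s_i$ strictly contracts distances in $[0,1]$ then $\hat s_i$ strictly contracts $d_\infty$-distances between distinct points of $[0,1]^d$ (distinctness forces $x_1\neq y_1$ or forces the $d_\infty$ distance to be realized in a collapsed coordinate, hence to drop to something strictly smaller — this small case check is worth spelling out).

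For the reverse direction, suppose $A\times\{0\}^{d-1}$ is the attractor of some IFS $\{t_1,\dots,t_k\}$ on $[0,1]^d$. I would project: let $\pi\colon[0,1]^d\to[0,1]$ be projection onto the first coordinate, and set $s_i = \pi\circ t_i\circ \iota$ where $\iota\colon[0,1]\to[0,1]^d$ is $x\mapsto(x,0,\dots,0)$. Since $\pi$ and $\iota$ are both $1$-Lipschitz, each $s_i$ is a contraction on $[0,1]$ with $Lip(s_i)\le Lip(t_i)<1$ (resp. a weak contraction in the weak case, using that $\iota$ is an isometry onto its image so distinct points stay distinct and the strict inequality is preserved). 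The key identity to verify is $s_i(A) = \pi(t_i(A\times\{0\}^{d-1}))$ together with the fact that $t_i(A\times\{0\}^{d-1})\subseteq A\times\{0\}^{d-1}$ (because $\bigcup_i t_i(A\times\{0\}^{d-1}) = A\times\{0\}^{d-1}$, so each piece lands inside), which lets one replace $t_i\circ\iota$ restricted to $A$ by $\iota\circ s_i$. Hence $\bigcup_i s_i(A) = \pi\big(\bigcup_i t_i(A\times\{0\}^{d-1})\big) = \pi(A\times\{0\}^{d-1}) = A$, and again uniqueness of the attractor identifies $A$ as the attractor of $\{s_1,\dots,s_k\}$. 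In the weak case one invokes the result of \cite{E} for existence and the self-similarity equation $\bigcup_i s_i(A)=A$ directly, which is all that "weak IFS attractor" requires.

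The main obstacle, such as it is, is the weak-contraction bookkeeping in the reverse direction: one must be careful that $s_i=\pi\circ t_i\circ\iota$ really is a \emph{weak} contraction, i.e. that for distinct $x,y\in[0,1]$ we get a \emph{strict} inequality $|s_i(x)-s_i(y)|<|x-y|$. This follows because $\iota$ is an isometry (so $\iota(x)\neq\iota(y)$ and $d_\infty(\iota(x),\iota(y))=|x-y|$), then $t_i$ strictly shrinks that distance, and $\pi$ is $1$-Lipschitz; chaining gives $|s_i(x)-s_i(y)| = |\pi(t_i\iota(x))-\pi(t_i\iota(y))| \le d_\infty(t_i\iota(x),t_i\iota(y)) < d_\infty(\iota(x),\iota(y)) = |x-y|$. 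One should also note explicitly that the Hausdorff distance on $K[0,1]^d$ restricted to sets of the form $C\times\{0\}^{d-1}$ agrees with the Hausdorff distance on $K[0,1]$ under $C\mapsto C\times\{0\}^{d-1}$, so no metric subtleties intervene; but this is immediate and does not really enter the proof of the lemma as stated, which is purely about the set-theoretic fixed-point equations. Everything else is a routine application of the Banach fixed-point theorem and the uniqueness clause it provides.
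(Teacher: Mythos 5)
Your proof is correct and takes essentially the same approach as the paper's (much terser) argument: lift each (weak) contraction $f$ on $[0,1]$ to $(x_1,\dots,x_d)\mapsto(f(x_1),0,\dots,0)$, and conversely restrict/project (weak) contractions on $[0,1]^d$ back to the embedded copy $[0,1]\times\{0\}^{d-1}$. Your composition $\pi\circ t_i\circ\iota$ and the use of $t_i(A\times\{0\}^{d-1})\subseteq A\times\{0\}^{d-1}$ merely spell out details the paper leaves implicit.
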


\begin{proof}
Observe that if $f:[0,1]^d\to[0,1]^d$ is a (weak) contraction then so is $f\upharpoonright[0,1]\times\{0\}^{d-1}$. On the other hand, for each (weak) contraction $f:[0,1]\to[0,1]$ the map $g:[0,1]^d\to[0,1]^d$ given by $g(x_1,\ldots,x_d)=(f(x_1),0,\ldots,0)$ is a (weak) contraction as well. 
\end{proof}

Recall that $A[0,1]^d$ is dense in $K[0,1]^d$, for every $d\in\mathbb{N}$, as it contains all nonempty finite sets. In particular, $A[0,1]^d$ cannot be itself nowhere dense. The following observation implies that the same is true for the set $A_w[0,1]^d\setminus A[0,1]^d$. 

\begin{proposition}
The set $A_w[0,1]^d\setminus A[0,1]^d$ is dense in $K[0,1]^d$, for every $d\in\mathbb{N}$.
\end{proposition}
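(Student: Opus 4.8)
The plan is to exhibit, in every basic open ball $B_H(C,\varepsilon)$ of $K[0,1]^d$, a compact set which is a wIFS attractor but not an IFS attractor. By Lemma~\ref{l} it suffices to handle the case $d=1$: indeed, if $D\in A_w[0,1]\setminus A[0,1]$ approximates a compact set well in $K[0,1]$, then $D\times\{0\}^{d-1}$ lies in $A_w[0,1]^d\setminus A[0,1]^d$, and one checks that the map $A\mapsto A\times\{0\}^{d-1}$ sends a dense subset of $K[0,1]$ to a dense subset of $K[0,1]^d$ (any $C\in K[0,1]^d$ is approximated by finite sets, and each finite set is approximated by finite subsets of $[0,1]\times\{0\}^{d-1}$ after a small rescaling argument; alternatively just observe that finite subsets of the segment are dense among all finite subsets of the cube in the relevant coordinates). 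So the task reduces to: $A_w[0,1]\setminus A[0,1]$ is dense in $K[0,1]$.

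Next I would recall the source of the separating example. By \cite{NFM} there exists a concrete set $E\in A_w[0,1]\setminus A[0,1]$; the natural candidate is a convergent sequence together with its limit, e.g. $E=\{0\}\cup\{1/n:n\geq 1\}$ (or a suitable affine copy of it), which is a weak IFS attractor but, being a sequence converging to a point at a ``non-geometric'' rate, is not the attractor of any classical IFS. The key closure property I would use is that the families $A[0,1]$ and $A_w[0,1]$ behave well under affine maps: if $g(x)=rx+t$ with $0<r$ and $g([0,1])\subseteq[0,1]$, then $g$ is a homeomorphism of $[0,1]^d$ onto its image whose inverse is also affine, and conjugating an (w)IFS by $g$ turns an attractor $A$ into $g(A)$. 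In particular $g(E)\in A_w[0,1]\setminus A[0,1]$ for every such $g$, since membership in $A[0,1]$ is preserved under affine conjugation in both directions.

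Now fix $C\in K[0,1]$ and $\varepsilon>0$. Choose a finite $\varepsilon/2$-net $\{c_1,\dots,c_m\}\subseteq C$, so that $d_H(C,\{c_1,\dots,c_m\})\leq\varepsilon/2$. Around each $c_i$ place a tiny affine copy $E_i=g_i(E)$ of $E$ with $g_i$ an orientation-preserving affine contraction so small that $E_i\subseteq[0,1]$, $c_i\in E_i$, and $\operatorname{diam}(E_i)<\varepsilon/2$. Let $D=\bigcup_{i=1}^m E_i$. Then $d_H(D,\{c_1,\dots,c_m\})<\varepsilon/2$, hence $d_H(D,C)<\varepsilon$. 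It remains to check that $D\in A_w[0,1]\setminus A[0,1]$. For membership in $A_w[0,1]$: a finite union of wIFS attractors is again a wIFS attractor --- take the union of the (reindexed) weak contractions witnessing each $E_i$, after post-composing if necessary with affine maps that place each piece correctly, which is a standard bookkeeping argument (finitely many weak contractions, finite union of their images equals $D$). For non-membership in $A[0,1]$: this is where the one real obstacle sits. A finite union of IFS attractors need not be an IFS attractor, and conversely one must rule out that the particular set $D$ happens to be one. I would argue via a local obstruction: $D$ contains the isolated limit point structure of $E$ near each $c_i$ (an accumulation point approached at a non-geometric rate), and a theorem/argument from \cite{NFM} (or a short direct argument about the behaviour of IFS attractors near accumulation points --- near an accumulation point an IFS attractor looks, up to the finitely many ``small'' maps, like a self-similar set, forcing geometric decay) shows no set with this local feature can be an IFS attractor.

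The main obstacle, then, is the verification that $D\notin A[0,1]$: one needs either to quote the precise form of the counterexample in \cite{NFM} and check it is robust under taking small affine copies and finite disjoint unions inside $[0,1]$, or to reprove the relevant local obstruction. Everything else --- the reduction to $d=1$ via Lemma~\ref{l}, the density of finite sets, the stability of $A_w[0,1]$ under affine copies and finite unions --- is routine.
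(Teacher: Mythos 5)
Your overall strategy (approximate by finite sets, then attach tiny copies of a weak-but-not-classical attractor) is the same in spirit as the paper's, but the proposal has a genuine gap exactly where the mathematical content of the proposition sits: you never prove that your set $D\notin A[0,1]$. Deferring to \cite{NFM} for a ``local obstruction'' does not close this. Even granting that the single set $E$ (say $\{0\}\cup\{1/n:n\geq 1\}$) is not an IFS attractor, your $D$ is a union of $m$ small affine copies of $E$, and a hypothetical IFS for $D$ is free to mix the copies: some maps may send one copy into the tail of another, others may cover only finitely many points, and nothing you wrote rules this out, so non-membership is not inherited automatically from $E$. The paper's proof is devoted almost entirely to this point: it does not use a generic counterexample but builds a special sequence $X$ (via $f(x)=x-x^2$ and conditions (a)--(e), in particular the quantitative condition $k_n>n+n\sum_{i<n}k_i$), and then, given any $k$ contractions covering $Y=F\cup X'$, chooses a level $n$ with $n>k\cdot|F|$, splits the maps according to whether they fix the accumulation point, uses a Lipschitz estimate to force $f_i[I_n']\subseteq I_{n+1}'$ for the fixing maps, and derives a cardinality contradiction at level $n$. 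You would need to reproduce an argument of this kind for your $D$ (robust under the presence of the other copies), or quote a precise statement strong enough to cover it; the bare existence of the counterexample in \cite{NFM} is not such a statement.

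Two secondary problems. First, your reduction to $d=1$ is wrong as written: sets of the form $D\times\{0\}^{d-1}$ lie in a single segment and cannot approximate an arbitrary compact subset of $[0,1]^d$; the claim that ``each finite set is approximated by finite subsets of $[0,1]\times\{0\}^{d-1}$'' is false. The correct use of Lemma~\ref{l}, as in the paper, is to keep the finite set $F\subseteq[0,1]^d$ intact and attach one small scaled copy of the one-dimensional example at a single point $w\in F$ (with $2\delta$ below the minimal distance between points of $F$). Second, ``a finite union of wIFS attractors is again a wIFS attractor'' is not routine bookkeeping: taking the union of the witnessing systems fails, because each weak contraction applied to the \emph{other} copies need not land inside $D$, so one has to conjugate, truncate and extend the maps and check that the extensions are still weak contractions on all of $[0,1]$ mapping $D$ into $D$. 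The paper sidesteps this by adding only a finite set to a single weak attractor, which can be absorbed by constant maps; if you insist on placing a copy of $E$ at every point of your net, you owe an explicit construction of the weak IFS for $D$.
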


\begin{proof}
Before the main part of the proof, we need to perform a construction in $[0,1]$, which will be needed later. This part is a simple modification of \cite[Proposition 3.1]{D}, however we repeat the reasoning for the sake of completeness. 

Let $f:[0,1]\to[0,1]$ be the weak contraction given by $f(x)=x-x^2$ for all $x\in[0,1]$. Define a sequence of intervals by $I_1=[0,\frac{1}{2}]$ and $I_{n+1}=[0,f(\max I_n)]$. Inductively pick points $x_n$ for $n\in\mathbb{N}$ in such a way that for each $n\in\mathbb{N}$ we have:
\begin{itemize}
\item[(a)] $x_n\in I_1$,
\item[(b)] $x_{n+1}<x_n$,
\item[(c)] $x_n-x_{n+1}>x_{n+1}-x_{n+2}$,
\item[(d)] $\{x_i:i\in\mathbb{N}\}\cap(I_n\setminus I_{n+1})$ is finite,
\item[(e)] $k_n>n+n\cdot(\sum_{i=1}^{n-1}k_i)$, where $k_n=|\{x_i:i\in\mathbb{N}\}\cap(I_n\setminus I_{n+1})|$.
\end{itemize}
Note that item (d) guarantees that $\lim_n x_n=0$. Put $X=\{0\}\cup\{x_n:n\in\mathbb{N}\}$.

Observe that $X\in A_w[0,1]$. Indeed, let $g:[0,1]\to[0,1]$ be the weak contraction such that $g(x_n)=x_{n+1}$ for all $n\in\mathbb{N}$ (such $g$ exists by items (b) and (c)). As $\lim_n\max I_n=0$, the unique fixed point of $g$ is $0$. Thus, if $h:[0,1]\to[0,1]$ is the function constantly equal to $x_1$ then $X=g[X]\cup h[X]$. 

We are ready for the main part of the proof. As the family of all finite nonempty sets is dense in $K[0,1]^d$, it suffices to show that for every finite nonempty set $F\subseteq [0,1]^d$ and every $\delta>0$ there is an attractor in $B_H(F,\delta)\cap(A_w[0,1]^d\setminus A[0,1]^d)$.

Let $F$ be a finite nonempty subset of $[0,1]^d$ and $\delta>0$ be such that $2\delta<\min\{d(x,y):x,y\in F,x\neq y\}$. Fix any $w\in F$ and consider the set $Y=F\cup (w+\delta \cdot(X\times\{0\}^{d-1}))$. Clearly, $Y\in B_H(F,\delta)$. By Lemma \ref{l}, $w+\delta\cdot (X\times\{0\}^{d-1})\in A_w[0,1]^d$. Hence, $Y\in A_w[0,1]^d$ as $Y\setminus (w+\delta\cdot (X\times\{0\}^{d-1}))$ is finite. 

To finish the proof, we will show that $Y\notin A[0,1]^d$. Assume towards contradiction that $f_1,\ldots,f_k:[0,1]^d\to[0,1]^d$ are standard contractions such that $Y=\bigcup_{i=1}^k f_i[Y]$. Denote $X'=w+\delta \cdot(X\times\{0\}^{d-1})$ and $I'_n=w+\delta \cdot(I_n\times\{0\}^{d-1})$ for all $n\in\mathbb{N}$. Find $n\in\mathbb{N}$ such that: 
\begin{itemize}
\item[(i)] $n>k\cdot |F|$ (here $|F|$ denotes the cardinality of the finite set $F$),
\item[(ii)] if $f_i(w)\neq w$ then $f_i[X']\cap X'\cap I'_n=\emptyset$ (notice that $f_i(w)\neq w$ implies $f_i[X']\cap X'$ being finite, by $\lim_n x_n=0$),
\item[(iii)] $\max_{i\leq k}Lip(f_i)\cdot|I_n|<|I_{n+1}|$ (recall that for $f=x(1-x)$ and any $L\in(0,1)$ we have $|f(x)-f(0)|=|x|\cdot|1-x|>L|x-0|$ for all $x\in(0,1-L)$).
\end{itemize}
We will justify that $X'\cap I'_n\setminus I'_{n+1}$ cannot be covered by $\bigcup_{i\leq k}f_i[Y]$. If $f_i(w)\neq w$ then $f_i[X']\cap X'\cap I'_n=\emptyset$ by item (ii). If $f_i(w)=w$ then item (iii) implies that $f_i[I'_n]\subseteq I'_{n+1}$. Therefore, 
$$(X'\cap I'_n\setminus I'_{n+1})\cap\bigcup_{i\leq k}f_i[Y]=(X'\cap I'_n\setminus I'_{n+1})\cap\bigcup_{i\leq k}f_i[F\cup(X'\cap I'_1\setminus I'_n)].$$ 
Using $|X'\cap I'_1\setminus I'_n|=\sum_{i=1}^{n-1}k_i$, we see that 
$$\left|\bigcup_{i\leq k}f_i[F\cup(X'\cap I'_1\setminus I'_n)]\right|\leq k\cdot\left(|F|+\sum_{i=1}^{n-1}k_i\right)<$$
$$<n+n\cdot\sum_{i=1}^{n-1}k_i<k_n=|X'\cap(I'_n\setminus I'_{n+1})|.$$ 
Thus, the proof is finished.
\end{proof}

We end with showing that $\sigma$-strong-porosity is an appropriate notion for our purposes -- settling whether $A[0,1]^d$ is $\sigma$-strongly-porous will require full depth of $A[0,1]^d$ (not only the information that all finite nonempty sets are in $A[0,1]^d$).

\begin{proposition}
The collection of all finite nonempty subsets of $[0,1]^d$ is $\sigma$-strongly-porous in $K[0,1]^d$, for every $d\in\mathbb{N}$.
\end{proposition}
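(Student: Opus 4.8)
The plan is to stratify the collection of finite nonempty subsets of $[0,1]^d$ by cardinality: write $\mathcal{F}_m$ for the set of all $F\subseteq[0,1]^d$ with $|F|=m$, so that the whole collection is $\bigcup_{m\in\mathbb{N}}\mathcal{F}_m$, a countable union. It then suffices to show that each $\mathcal{F}_m$ is strongly porous in $K[0,1]^d$. Fix $m$ and fix an arbitrary $C\in K[0,1]^d$; I must produce sequences $(C_n)\subseteq K[0,1]^d$ and $(r_n)\subseteq(0,\infty)$ with $C_n\to C$, $r_n/d_H(C,C_n)\to 1$, and $B_H(C_n,r_n)\cap\mathcal{F}_m=\emptyset$ for all $n$.

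The key geometric idea is that a ball $B_H(C_n,r_n)$ avoids $\mathcal{F}_m$ as soon as every set within Hausdorff distance $r_n$ of $C_n$ is forced to have more than $m$ points — for instance, if $C_n$ itself contains a finite "$\varepsilon$-net-like" configuration of more than $m$ points that are pairwise more than $2r_n$ apart, together with enough spread that no $m$-point set can be $r_n$-close. Concretely, I would take $C_n$ to be $C$ with a tiny perturbation: pick a point $c\in C$, and for a small parameter $\varepsilon_n\to 0$ adjoin to $C$ a cluster of $m+1$ points inside the ball $B(c,\varepsilon_n)$ (staying in $[0,1]^d$; possible since one can always move slightly inward), pairwise separated by distances on the order of $\varepsilon_n$. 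Set $C_n$ equal to this enlarged set. Then $d_H(C,C_n)\le\varepsilon_n\to 0$, in fact $d_H(C,C_n)$ is comparable to $\varepsilon_n$. Now choose $r_n$ slightly below the threshold that would let an $(m)$-point set be $r_n$-close to $C_n$: if a compact set $D$ satisfies $d_H(D,C_n)\le r_n$, then $D$ must contain a point within $r_n$ of each of the $m+1$ clustered points of $C_n$; making $r_n$ less than half the minimal pairwise distance within the cluster forces these to be $m+1$ distinct points of $D$, so $|D|\ge m+1$ and $D\notin\mathcal{F}_m$. Thus $B_H(C_n,r_n)\cap\mathcal{F}_m=\emptyset$.

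It remains to arrange $r_n/d_H(C,C_n)\to 1$. Both quantities are controlled by $\varepsilon_n$: $d_H(C,C_n)$ equals (roughly) the largest distance from a new point to $C$, and the admissible $r_n$ is (roughly) half the smallest gap in the cluster. The trick is to lay out the $m+1$ new points so that these two scales coincide up to a factor tending to $1$: e.g. place the new points along a short segment emanating from $c$ into the interior of $[0,1]^d$, at positions $c + (j/(m)) \cdot t_n \cdot v$ for $j=0,\dots,m$ where $v$ is a unit vector pointing inward and $t_n\to 0$; then the farthest new point is at distance $t_n$ from $c\in C$ while consecutive new points are at distance $t_n/m$. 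This gives $d_H(C,C_n)\approx t_n$ but $r_n\approx t_n/(2m)$, a ratio bounded away from $1$ — so a single inward segment is not enough, and instead I would spread the $m+1$ points on a sphere of radius $t_n$ around $c$, pairwise nearly equidistant, so that every new point is at distance about $t_n$ from $C$ and every pair of new points is at distance about $c_m t_n$ for a dimensional constant; rescaling, one can then take $r_n = \tfrac12 c_m t_n - o(t_n)$ and $d_H(C,C_n) = t_n(1+o(1))$... which still gives a constant ratio.

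The genuine obstacle — and the step I expect to require the actual argument rather than a sketch — is precisely reconciling "distance of new points from the old set $C$" with "pairwise distance among new points": naively these differ by a constant factor, which only yields $\underline{p}\ge$ const $>0$, not strong porosity $\overline p=\tfrac12$. The resolution should be to exploit that we only need the $\limsup$ over $R\to 0^+$: one does not perturb near a fixed $c\in C$ but near a point $c$ that is itself $\approx R$-isolated in $C$ at scale $R$ (using that at arbitrarily small scales $R$ there exist points of $C$ with an empty annulus around them, or else $C$ is "thick" there and one argues differently), so that the ball $B_H(C_n,r_n)$ can be taken with $C_n$ a single-point modification far from the rest of $C$ and $r_n$ nearly equal to $R$ itself. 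I would therefore split into cases according to the local structure of $C$ at scale $R$: either there is a point of $C$ with a large relatively-empty neighborhood, in which case a one-point perturbation there gives a ratio $\to 1$ directly as in the finite-set case; or $C$ is locally "full" at scale $R$, in which case $C_n$ obtained by deleting a small piece of $C$ and reinserting an $(m+1)$-cluster does the job with $r_n$ close to $R$. Handling this dichotomy uniformly in $d$, and checking the $\limsup$ computation in each case, is the crux; everything else is the routine Hausdorff-metric estimate that $m+1$ well-separated points in $C_n$ force $|D|>m$ for $D\in B_H(C_n,r_n)$.
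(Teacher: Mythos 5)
Your proposal stops exactly where the actual proof has to begin, and you say so yourself: the additive construction (adjoining a cluster of $m+1$ points near a retained point of $C$) can never give $r_n/d_H(C,C_n)\to 1$, since the new points must be pairwise more than $2r_n$ apart yet all lie within about $d_H(C,C_n)$ of $C$, and the closing ``dichotomy'' is a hope rather than an argument. The missing idea is a \emph{replacement}, not an addition, and two new points suffice: at a finite set $F$ with $|F|=k$, pick a point $x\in F$ that is not a vertex of the cube, and set $G_n=(F\setminus\{x\})\cup\{y_n,z_n\}$, where $y_n,z_n$ lie on opposite sides of $x$ with $d(y_n,x)=d(z_n,x)=r_n\to 0$ and $B(y_n,r_n)\cap B(z_n,r_n)=\emptyset$. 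Because the two new points are placed symmetrically about the \emph{deleted} point, $d_H(G_n,F)=r_n$ exactly, so the ratio is $1$; and any $H$ with $d_H(H,G_n)<r_n$ must meet each of the $k+1$ pairwise disjoint balls $B(y_n,r_n)$, $B(z_n,r_n)$, $B(w,r_n)$ for $w\in F\setminus\{x\}$, hence $|H|\geq k+1$ and $H$ is not a $k$-element set. Your cluster of $m+1$ points is both unnecessary and the precise source of the constant-factor loss you could not remove.

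Two further gaps are not fixable within your setup. First, stratifying by cardinality alone is too coarse: at a set consisting entirely of vertices of $[0,1]^d$ there is no room to place two opposite points inside the cube, and in fact your class $\mathcal{F}_2$ (all $2$-element sets) fails strong porosity at its own member $\{0,1\}$ when $d=1$: any $G$ with $d_H(G,\{0,1\})=\rho$ splits as $G_0\cup G_1$ with each $G_i$ contained in an interval of length $\rho$, and the pair formed by the midpoints of the smallest intervals containing $G_0$ and $G_1$ is a $2$-element set within $\rho/2$ of $G$, so no ball of radius close to $\rho$ around any such $G$ avoids $\mathcal{F}_2$. The paper's decomposition handles this by splitting off $\mathcal{P}(D)$, where $D$ is the finite set of vertices (a finite family, hence trivially strongly porous), and taking $\mathcal{F}_k$ to be the $k$-element sets containing at least one non-vertex point. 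Second, your insistence on verifying the condition at an \emph{arbitrary} compact $C$ is what drives you into the unresolved case analysis, and for cardinality classes it is genuinely unattainable: for $d=1$ and an interior point $x$, any $G$ with $d_H(G,\{x\})=\rho$ lies in an interval of length $2\rho$, and covering its two halves by intervals of length $2r$ produces a $2$-element set within $\max(r,\rho/2)$ of $G$, so the admissible radius is capped at $\rho/2$ and the class of $2$-element sets is not strongly porous at the singleton $\{x\}$. The paper produces the porosity witnesses at the members $F$ of each $\mathcal{F}_k$ (which is where the estimate is needed for the decomposition to do its job); if one wants the estimate literally at every point of the closure of a piece, the decomposition has to be refined further (e.g.\ by lower bounds on the minimal gap and on the distance of some point to $D$), which your cardinality-only pieces do not permit.
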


\begin{proof}
Observe that the family of all finite nonempty subsets of $[0,1]^d$ is equal to $\mathcal{P}(D)\cup\bigcup_{k\in\omega}\mathcal{F}_k$, where
$$D=\left\{(x_1,\ldots,x_d)\in[0,1]^d:\forall_{i\leq d}\ x_i=0\text{ or }x_i=1\right\},$$
$$\mathcal{F}_k=\{F\subseteq[0,1]^d: |F|=k,F\setminus D\neq\emptyset\}.$$
Clearly, $\mathcal{P}(D)$ is strongly porous as a finite set. Thus it suffices to show that $\mathcal{F}_k$ is strongly porous for each $k\in\mathbb{N}$. 

Fix $k\in\mathbb{N}$, $F\in \mathcal{F}_k$ and any $x\in F\setminus D$. Find three sequences $(y_n),(z_n)\subseteq[0,1]^d$ and $(r_n)\subseteq(0,1)$ such that:
\begin{itemize}
\item $r_n=d(y_n,x)=d(z_n,x)$ for all $n$,
\item $y_n\neq z_n$ for all $n$,
\item $B(y_n,r_n)\cap B(z_n,r_n)=\emptyset$ for all $n$,
\item $\lim_n r_n=0$.
\end{itemize}
This is possible as $x\notin D$.

Put $G_n=(F\setminus\{x\})\cup\{y_n,z_n\}$. As $d_H(G_n,F)=r_n\to 0$, the sequence $(G_n)$ tends to $F$ (in $K[0,1]^d$) and $\lim_n\frac{r_n}{d_H(G_n,F)}=1$. To conclude the proof, we need to justify that $B_H(G_n,r_n)\cap\mathcal{F}_k=\emptyset$.

Let $H\in B_H(G_n,r_n)$. Then $H$ has to intersect each of the balls $B(y_n,r_n)$, $B(z_n,r_n)$ and $B(w,r_n)$ for $w\in F\setminus\{x\}$. As those balls are pairwise disjoint, $H$ has to have at least $2+k-1=k+1$ elements. Hence, $H\notin \mathcal{F}_k$ and the proof is finished.
\end{proof}

\section{Attractors for IFSs}

\begin{theorem}
\label{strong}
The set $A[0,1]$ is not $\sigma$-strongly porous in $K[0,1]$.
\end{theorem}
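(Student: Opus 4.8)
\medskip

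The plan is to show that $A[0,1]$ cannot be written as a countable union of strongly porous sets by producing, for any candidate decomposition $A[0,1]=\bigcup_n P_n$, a point witnessing failure of strong porosity at one of the $P_n$. Since strong porosity is a pointwise condition, it suffices (via a Baire-category argument inside a suitable closed subspace of $K[0,1]$) to exhibit a single compact set $C\in A[0,1]$ and a radius scale along which \emph{every} ball $B_H(G,r)$ with $d_H(G,C)$ comparable to $r$ and $r/d_H(G,C)$ close to $1$ must meet $A[0,1]$. Concretely, I would look for $C$ — most naturally a classical self-similar Cantor-type attractor, e.g.\ the standard middle-thirds set or a variant with carefully chosen contraction ratios — with the property that nearby compact sets can always be ``repaired'' into genuine IFS attractors by a small perturbation whose size is of strictly smaller order than the distance from $C$. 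That is the anti-porosity mechanism: if from every $G$ close to $C$ one can reach an element of $A[0,1]$ by moving a distance $o(d_H(G,C))$, then no ball of radius $(1-o(1))\,d_H(G,C)$ around $G$ avoids $A[0,1]$, so $\overline p(A[0,1],C)<\tfrac12$ and $C$ is not a point of strong porosity of any set containing it. Running this for all $C$ in a residual subset of some perfect subspace of $K[0,1]$, and intersecting with the (meager, hence category-compatible on the right space) sets $P_n$, forces some $P_n$ to contain such a $C$, contradicting its strong porosity.

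\medskip

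The key steps, in order, would be: (1) fix a concrete family $\mathcal{C}$ of ``flexible'' classical attractors — closure of an appropriate set of finite unions of scaled copies of a fixed Cantor attractor, arranged so that $\overline{\mathcal C}$ is a perfect Polish subspace of $K[0,1]$ contained in $A[0,1]$ wherever it matters, or at least dense-in-itself; (2) prove the core \emph{repair lemma}: there is a function $\varepsilon(R)=o(R)$ such that for every $C\in\mathcal C$, every sufficiently small $R>0$, and every $G\in B_H(C,R)$, there exists $G'\in A[0,1]$ with $d_H(G,G')\le\varepsilon(R)$ — here one builds $G'$ by taking $G$, snapping it onto a fine enough grid / onto a finite union of tiny scaled copies of the base attractor whose convex hulls cover $G$ up to error $\varepsilon(R)$, and checking this union is itself an IFS attractor (finite unions of similar copies of a self-similar set, suitably placed, are attractors); (3) deduce $\overline p(A[0,1],C)\le\tfrac12-c$ for a fixed $c>0$ and all $C\in\mathcal C$; (4) invoke the fact that a strongly porous set $P$ satisfies $\overline p(P,x)=\tfrac12$ at every $x\in\overline P$, so $P_n\cap\mathcal C=\emptyset$ for each $n$, whence $\mathcal C\cap A[0,1]=\emptyset$ — contradicting that $\mathcal C\subseteq A[0,1]$. (A cleaner variant avoids Baire category entirely: if $\mathcal C$ is a single perfect set lying in $A[0,1]$ and no $P_n$ meets it, the decomposition simply fails to cover $\mathcal C$.)

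\medskip

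The main obstacle I expect is step (2), the repair lemma — specifically producing $G'\in A[0,1]$ genuinely close to an arbitrary $G$ near $C$ while controlling the error to be of lower order than $R$. The subtlety is that $A[0,1]$ is only $F_\sigma$ and being an IFS attractor is a rigid condition (unlike membership in $A_w[0,1]$); one cannot just perturb arbitrarily. The trick should be to exploit the specific geometry of the chosen $C$: because $C$ itself is a finite union of scaled copies of a master self-similar set $K$, any $G$ within $R$ of $C$ is within $R$ of that rigid scaffolding, and one can cover $G$ by a controlled number of copies of a \emph{rescaled} $K$ at scale $\sim R$ (or finer), forming a set $G'$ that is simultaneously (a) within $o(R)$ of $G$ — using that $K$, being a Cantor set of the right dimension, is ``thick'' enough to Hausdorff-approximate any closed set it is laid over — and (b) a bona fide IFS attractor, because a finite union of isometric/similar copies of a self-similar set with the right placement satisfies a Hutchinson equation. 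Making ``thick enough'' quantitative, and reconciling it with the self-similarity constraint so that the union really closes up under a finite IFS, is the delicate part; the auxiliary construction with the weak contraction $f(x)=x-x^2$ in the preceding Proposition, together with Lemma \ref{l}, suggests the authors already have the toolkit for such grid-snapping arguments, and I would lean on analogous estimates here.
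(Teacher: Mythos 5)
Your overall instinct (a flexible family of Cantor-type attractors plus a Baire-category argument) points in the direction of the paper, but the logical core of your plan has a genuine gap: your ``repair lemma'' and steps (3)--(4) only show that every ball $B_H(G,r)$ with $r$ comparable to $d_H(G,C)$ meets $A[0,1]$, i.e.\ that $\overline p(A[0,1],C)$ is small. This is vacuously true at \emph{every} point of $K[0,1]$, because finite nonempty sets are attractors and are dense in $K[0,1]$ --- so no repair lemma is even needed for it --- and it does not contradict $\sigma$-strong porosity: the paper's own Proposition shows the dense family of finite sets is itself $\sigma$-strongly porous. The monotonicity of porosity also runs against you in step (4): for $P_n\subseteq A[0,1]$ one has $\overline p(P_n,C)\ge\overline p(A[0,1],C)$, so knowing $\overline p(A[0,1],C)\le\frac12-c$ gives no contradiction with $\overline p(P_n,C)=\frac12$, and your conclusion $P_n\cap\mathcal C=\emptyset$ does not follow. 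To defeat a decomposition $A[0,1]=\bigcup_n P_n$ you must produce, for some single piece $P_m$, a point $E$ and show that balls of radius $0.99\,d_H(Y,E)$ around nearby $Y$ meet that very piece $P_m$, not merely $A[0,1]$.

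This is exactly what the paper arranges and what is missing from your outline: it codes a family of Cantor-like attractors by the parameter cube $\left[-\frac32,\frac32\right]^{\mathbb N}$ (shifts of the construction intervals of a ratio-$\frac1{10}$ Cantor set at each level), applies the Baire category theorem \emph{in the parameter space} to find one piece $P_m$ whose parameter set is dense in a basic open box, takes $E$ to be the attractor coded by the center of that box, and then shows that for any $Y$ near $E$ the needed ``repair'' can be realized by shifting the level-$(j+2)$ intervals with shift parameters ranging over nonempty open subsets of the box; density of $P_m$'s parameters in that box then yields an attractor $H\in P_m$ with $H\subseteq\widetilde Y_{\frac{99}{100}\delta}$ and $Y\subseteq\widetilde H_{\frac{99}{100}\delta}$, contradicting strong porosity of $P_m$ at $E$. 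Your proposal lacks this mechanism for steering the repaired attractor into the distinguished piece of the decomposition (your grid-snapping $G'$ comes with no control over which $P_n$ it lies in), and without it the argument cannot reach the conclusion.
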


\begin{proof}	
	We will prove the following.
	$$ \exists_{E \in A[0,1]}\:\exists_{\varepsilon>0}\:\forall_{Y\in B_H(E,\varepsilon)} \:\: B_H(Y, d_H(Y,E)\cdot 0.99)\cap A[0,1] \neq \emptyset. $$
	From this, the negation of the definition of $\sigma$-strong porosity will follow.
	
	The idea of the proof is to find a correspondence between $A[0,1]$ and the set $[-\frac{3}{2},\frac{3}{2}]^\mathbb{N}$ of all sequences with all terms belonging to the interval $[-\frac{3}{2},\frac{3}{2}]$, then observe that in any decomposition of $[-\frac{3}{2},\frac{3}{2}]^\mathbb{N}$ into countably many pieces, one will be "large" (i.e. dense in some set $U\subseteq [-\frac{3}{2},\frac{3}{2}]^\mathbb{N}$ open in the product topology), pick any attractor $E$ corresponding to an element of $U$ and show that for any $Y\in B_H(E,\varepsilon)$ there is an open $V\subseteq U$ such that the attractor corresponding to any element of $V$ is in $B_H(Y, d_H(Y,E)\cdot 0.99)$.
	
	The correspondence will be as follows: we will match $(x_n)\in[-\frac{3}{2},\frac{3}{2}]^\mathbb{N}$ (i.e. a sequence of reals) with a Cantor-like attractor. By a Cantor-like attractor we mean an attractor generated by two contractions: each of them will be "based" on the following functions:
	$$f_1(x) = \frac{1}{10}x + \frac{2}{10}, \:\: f_2(x) = \frac{1}{10}x + \frac{7}{10}.$$
	Therefore, in each iteration a new image will be a "tenth" of a size of the previous image. However, also, every image of an interval can be moved by $\frac{3}{2}$ of the tenth of the interval to the left or to the right (for instance, in the case of the first iteration: the left-hand image will be a subset of $[\frac{1}{20},\frac{9}{20}]$ and the right-hand image will be a subset of $[\frac{11}{20},\frac{19}{20}]$ -- note that the distance between them is at least $\frac{1}{10}$, so at the end we will still get an attractor for some IFS). The endpoints of those images will be determined by the sequence $(x_n)$. For instance, a sequence constantly equal to 0 will "code" exactly the functions $f_1, f_2$. A sequence constantly equal to $-\frac{3}{2}$ will result in functions 
	$$\frac{1}{10}x + \frac{1}{20}, \frac{1}{10}x + \frac{11}{20}$$ 
	and a sequence constantly equal to $\frac{3}{2}$ will result in functions 
	$$\frac{1}{10}x + \frac{7}{20}, \frac{1}{10}x + \frac{17}{20}.$$ 
	And obviously, all the "in-between" shifts of intervals will be coded by terms between $-\frac{3}{2}$ and $\frac{3}{2}$. Below we present details.
	
	Let the correspondence between $(x_n)$ and $(f_1,f_2)$ be the following: for each $i\in\mathbb{N}$ let $x_i$ be such that:
	\begin{itemize}
		\item[$x_1:\:\:\:\:$] $f_1(0) = \frac{2}{10} + \frac{1}{10}\cdot x_1$
		\item[$x_2:\:\:\:\:$] $f_2(0) = \frac{7}{10} + \frac{1}{10}\cdot x_2$
		\item[$x_3:\:\:\:\:$] $f_1(f_1(0)) = f_1(0) + \frac{2}{100} + \frac{1}{100} \cdot x_3$
		\item[$x_4:\:\:\:\:$] $f_2(f_1(0)) = f_1(0) + \frac{7}{100} + \frac{1}{100} \cdot x_4$
		\item[$x_5:\:\:\:\:$] $f_1(f_2(0)) = f_2(0) + \frac{2}{100} + \frac{1}{100} \cdot x_5$
		\item[$x_6:\:\:\:\:$] $f_2(f_2(0)) = f_2(0) + \frac{7}{100} + \frac{1}{100} \cdot x_6$
		\item[$x_7:\:\:\:\:$] $f_1(f_1(f_1(0))) = f_1(f_1(0)) + \frac{2}{1000} + \frac{1}{1000} \cdot x_7$
	\end{itemize}
and so on.
	
Suppose to the contrary that $A[0,1]$ is $\sigma$-strongly porous. Then so is the family $\mathcal{F}$ of attractors corresponding (in the above way) to sequences in $[-\frac{3}{2},\frac{3}{2}]^\mathbb{N}$. In particular, $\mathcal{F}=\bigcup_{m\in\mathbb{N}} P_m$ where each $P_m$ is a strongly porous subset of $K[0,1]$. The sets $P_m$ define a cover of $[-\frac{3}{2},\frac{3}{2}]^\mathbb{N}$ by countably many pieces. Since $[-\frac{3}{2},\frac{3}{2}]^\mathbb{N}$ is a complete metric space, by the Baire category theorem there is $m\in\mathbb{N}$ and an open set $I_1\times I_2 \times \dots \times I_k \times [-\frac{3}{2}, \frac{3}{2}]^\mathbb{N}\subseteq [-\frac{3}{2}, \frac{3}{2}]^\mathbb{N}$ (where $I_1,\ldots,I_k$ are open intervals in $[-\frac{3}{2}, \frac{3}{2}]$) such that the sequences corresponding to attractors from $P_m$ are dense in $I_1\times I_2 \times \dots \times I_k \times [-\frac{3}{2}, \frac{3}{2}]^\mathbb{N}$. For every $i\leq k$ put $x_i$ as a middle of an interval $I_i$. Then the sequence $(x_1, x_2, \dots, x_k, 0, 0, \dots )\in[-\frac{3}{2}, \frac{3}{2}]^\mathbb{N}$ determines an attractor $E\in K[0,1]$.

Let us take $\varepsilon$ such that it is half the length of an interval that is one level "below" $k$-th interval in the construction of the attractor. More formally, since $x_1$ and $x_2$ are responsible for the positions of intervals of length $\frac{1}{10}$, $x_3,x_4,x_5$ and $x_6$ are responsible for the positions of intervals of length $\frac{1}{100}$ etc., $\varepsilon = \frac{1}{2\cdot 10^n}$, where $n\in\mathbb{N}$ is such that $\sum_{i=1}^{n-2}2^i<k\leq \sum_{i=1}^{n-1}2^i$. 

Now fix $Y\in B_H(E,\varepsilon)$ and to shorten the notation put $\delta = d_H(Y,E)$. Obviously, $\delta < \varepsilon= \frac{1}{2\cdot 10^n}$. Thus, there exists $j\geq n$ such that $\delta \in [\frac{1}{2}\cdot\frac{1}{10^{j+1}}, \frac{1}{2}\cdot\frac{1}{10^j} )$. Let $J_1,\ldots,J_{2^{j+2}}$ be the intervals of length $\frac{1}{10^{j+2}}$ from the construction of the attractor $E$. 

Observe that:
\begin{itemize}
\item $\widetilde{Y}_\delta\supseteq E$ (since $\delta = d_H(Y,E)$),
\item $\delta-\frac{99}{100}\delta=\frac{\delta}{100}<\frac{1}{2\cdot 10^{j+2}}$,
\item the maximal possible shift of each $J_i$ is equal to 
$$\frac{3}{2\cdot 10^{j+2}}=\frac{1}{10^{j+2}}+\frac{1}{2\cdot 10^{j+2}}>|J_i|+\frac{\delta}{100},$$
\item the set $\widetilde{Y}_{\frac{99}{100}\delta}$ is a union of closed intervals each of which has length at least $2\frac{99}{100}\delta\geq\frac{99}{10^{j+3}}>\frac{1}{10^{j+2}}=|J_i|$.
\end{itemize}
Thus, for each $i\leq 2^{j+2}$ the set $U_i\subseteq[-\frac{3}{2}, \frac{3}{2}]$ equal to the interior of
$$\left\{x\in\left[-\frac{3}{2}, \frac{3}{2}\right]:J_i+x\subseteq\widetilde{Y}_{\frac{99}{100}\delta}\right\}$$
is nonempty. Hence, there is an open subset of $I_1\times I_2 \times \dots \times I_k \times [-\frac{3}{2}, \frac{3}{2}]^\mathbb{N}$ such that each attractor corresponding to an element of that open set is a subset of $\widetilde{Y}_{\frac{99}{100}\delta}$. 

Actually, by the last item above, we can even conclude that the set $U_i\setminus[-10^j\delta,10^j\delta]$ is nonempty for each $i\leq 2^{j+2}$. Indeed, the interval $[-10^j\delta,10^j\delta]$ is responsible for shifts by at most $\frac{\delta}{100}<\frac{1}{2}\frac{1}{10^{j+2}}$ (as each $x\in[-\frac{3}{2}, \frac{3}{2}]$ corresponds to a shift of $J_i$ by $\frac{x}{10^{j+2}}$). Thus, it suffices to observe that $|J_i|=\frac{1}{10^{j+2}}$, $|[-\frac{1}{100}\delta,\frac{1}{100}\delta]|<\frac{1}{10^{j+2}}$ and $2\frac{99}{100}\delta> 2\frac{1}{10^{j+2}}$.

Now, for each $i\leq 2^{j+2}$ let $V_i\subseteq U_i$ be open and such that:
\begin{itemize}
\item if $i$ is odd (i.e., the closest interval $J_{i'}$ to $J_i$ is on the right-hand side of $J_i$) and $U_i\cap[-\frac{3}{2},-10^j\delta)\neq\emptyset$ then $V_i=U_i\cap[-\frac{3}{2},-10^j\delta)$,
\item if $i$ is even (i.e., the closest interval $J_{i'}$ to $J_i$ is on the left-hand side of $J_i$) and $U_i\cap(10^j\delta,\frac{3}{2}]\neq\emptyset$ then $V_i=U_i\cap(10^j\delta,\frac{3}{2}]$,
\item $V_i=U_i\setminus [-10^j\delta,10^j\delta]$ in all other cases.
\end{itemize}

Let $H\in P_m$ be an attractor corresponding to any sequence in 
$$I_1\times I_2 \times \dots \times I_k \times \left[-\frac{3}{2}, \frac{3}{2}\right]^{\sum_{i=1}^{j+1}2^i-k}\times V_1\times\ldots\times V_{2^{j+2}}\times \left[-\frac{3}{2}, \frac{3}{2}\right]^\mathbb{N}.$$
Then $H\subseteq \widetilde{Y}_{\frac{99}{100}\delta}$. Moreover, $Y\subseteq \widetilde{H}_{\frac{99}{100}\delta}$. Indeed, fix $y\in Y$ and let $e\in E$ be such that $d(y,e)$ is minimal possible (i.e., $d(y,e)=d(y,E)$). Without loss of generality we may assume that $e\in J_i$ for some odd $i\leq 2^{j+2}$ (the case of even $i$ is similar). Let $z$ be the point lying exactly in the middle between $J_i$ and $J_{i+1}$. Denote also by $e'$ the point corresponding to $e$ in $H$. Then $s=d(e,e')$ is the shift of $J_i$. There are three possibilities: 
\begin{itemize}
\item $y\leq e'$ and $J_i$ was shifted to the left (i.e., towards $y$). In this case 
$$d(y,e')=d(y,e)-s<\delta-\frac{\delta}{100}=\frac{99}{100}\delta$$
as the shift was by more than $\frac{\delta}{100}$.
\item $J_i$ was shifted to the left, but $y>e'$. In this case the most extreme possibility is $y=z$. However, we have:
$$d(z,e)\leq|J_i|+\frac{1}{2}\text{dist}(J_i,J_{i+1})=\frac{1}{10^{j+2}}+\frac{2}{10^{j+2}}=\frac{3}{10^{j+2}}.$$ 
Since $s\leq\frac{3}{2}\cdot\frac{1}{10^{j+2}}$, we get that
$$d(z,e')=d(z,e)+s\leq\frac{3}{10^{j+2}}+\frac{3}{2}\cdot\frac{1}{10^{j+2}}<\frac{9}{2}\cdot\frac{1}{10^{j+2}}<\frac{99}{100}\frac{1}{2}\frac{1}{10^{j+1}}\leq\frac{99}{100}\delta.$$
\item $J_i$ was shifted to the right. We will show that this implies that there is no $y\in Y$ such that $d(y,e)=d(y,E)$ for $e\in J_i$, i.e., this case is impossible. Indeed, if $y>z$ then $d(y,J_{i+1})<d(y,J_{i})$. On the other hand, if $y<\min J_i-\delta$ then $d(y,J_{i-1})<d(y,J_{i})$ (as $y\in\bigcup_{i'\leq 2^{j+2}}\widetilde{(J_{i'})}_\delta$). Finally, if $\min J_i-\delta\leq y\leq z$ then it would be possible to shift $J_i$ by some $\frac{\delta}{100}<t<\frac{3}{2}\frac{1}{10^{j+2}}$ to the left in such a way that $t+J_i\subseteq B(y,\frac{99}{100}\delta)$ as $2\frac{99}{100}\delta>|J_i|$ and
$$d(z,\min J_i)+\frac{\delta}{100}\leq\frac{3}{10^{j+2}}+\frac{1}{2}\frac{1}{100}\frac{1}{10^{j}}<\frac{99}{100}\frac{1}{2}\frac{1}{10^{j+1}}\leq\frac{99}{100}\delta,$$
$$d(\min J_i-\delta,\max J_i)-\frac{3}{2}\frac{1}{10^{j+2}}=\delta+|J_i|-\frac{3}{2}\frac{1}{10^{j+2}}=\delta-\frac{1}{2}\frac{1}{10^{j+2}}<\frac{99}{100}\delta,$$
(recall that $\delta<\frac{1}{2}\frac{1}{10^{j}}$).
\end{itemize}
This finishes the proof.
\end{proof}

Knowing the status of $\sigma$-strong porosity of $A[0,1]$, we ask the same question for the difference between this family and a broader one, namely the family of weak attractors.

\begin{question}
Is the set $A_w[0,1]\setminus A[0,1]$ $\sigma$-strongly-porous in $K[0,1]$?
\end{question}

\newcommand{\nosort}[1]{}

\begin{center}
\flushleft{{\sl Addresses:}} \\
Pawe\l{} Klinga \\
Institute of Mathematics \\
University of Gda\'nsk \\
Wita Stwosza 57 \\
80 -- 952 Gda\'nsk \\
Poland\\
e-mail: pawel.klinga@ug.edu.pl
\end{center}

\begin{center}
\flushleft{{\sl Address:}} \\
Adam Kwela \\
Institute of Mathematics \\
University of Gda\'nsk \\
Wita Stwosza 57 \\
80 -- 952 Gda\'nsk \\
Poland\\
e-mail: adam.kwela@ug.edu.pl
\end{center}


\begin{thebibliography}{1}

	\bibitem{DS1}
	E. D'Aniello, T. H. Steele,
	\newblock Attractors for iterated function schemes on $[0,1]^N$ are exceptional,
	\newblock {\em Journal of Mathematical Analysis and Applications}, {\bf 424}: 534-541, 2015. $\,$

	\bibitem{DS2}
	E. D'Aniello, T. H. Steele,
	\newblock Attractors for iterated function systems,
	\newblock {\em Journal of Fractal Geometry}, {\bf 3}: 95-117, 2016. $\,$
	
	\bibitem{DS3}
	E. D'Aniello, T. H. Steele,
	\newblock Attractors for classes of iterated function systems,
	\newblock {\em European Journal of Mathematics}, {\bf 5}: 116-137, 2019. $\,$
	
	\bibitem{E}
	M. Edelstein,
	\newblock On fixed and periodic points under contractive mappings,
	\newblock {\em Journal of London Mathematical Society s1-37}, {\bf 1}: 74-79, 1962. $\,$
	
	\bibitem{Kechris}
	A.S. Kechris,
  \newblock Classical Descriptive Set Theory,
  \newblock Springer, New York 1998. $\,$
		
	\bibitem{B}
	P. Klinga and A. Kwela,
	\newblock Borel complexity of the family of attractors for weak IFSs,
	\newblock submitted. $\,$
	
	\bibitem{D}
	P. Klinga and A. Kwela,
	\newblock IFSs and weak IFSs of size n and families of their attractors,
	\newblock submitted. $\,$

	\bibitem{my}
	P. Klinga, A. Kwela and M. Staniszewski,
	\newblock Size of the set of attractors for iterated function systems,
	\newblock {\em Chaos, Solitons and Fractals}, {\bf 128}: 104—107, 2019. $\,$
	
	\bibitem{shell}
	M. Koc, 
	\newblock Upper porous sets which are not $\sigma$-lower porous, 
	\newblock {\em Real Analysis Exchange}, {\bf 35}: 21-30, 2009/2010. $\,$
	
	\bibitem{Preiss}
	M. E. Mera, M. Mor\'{a}n, D. Preiss, and L. Zajíček, 
	\newblock Porosity, $\sigma$-porosity and measures, 
	\newblock {\em Nonlinearity}, {\bf 16}: 247-255, 2003. $\,$
	
	\bibitem{NFM}
	M. Nowak, M. Fernández-Martínez,
	\newblock Counterexamples for IFS-attractors,
	\newblock {\em Chaos Solitons \& Fractals}, {\bf 89}: 316-321, 2016. $\,$
	
	\bibitem{SS}
	L. L. Stach\'{o} and L. I. Szab\'{o},
	\newblock A note on invariant sets of iterated function systems,
	\newblock {\em Acta Mathematica Hungarica}, {\bf 119}: 159-164, 2008. $\,$
	
	\bibitem{Z} 
	L. Zajíček,
	\newblock On $\sigma$-porous sets in abstract spaces,
	\newblock {\em Abstract and Applied Analysis}, {\bf vol. 2005, no. 5}: 509-534, 2005. $\,$
	
\end{thebibliography}
\end{document}